\newtheorem{theorem}{Theorem}
\newtheorem{proposition}{Proposition}
\newcommand{\dsp}{\displaystyle}
\newcommand{\eps}{\varepsilon}
\newcommand{\R}{\mathbb{R}}
\numberwithin{equation}{section}
\begin{document}
\title{The gravitational Vlasov-Poisson system with infinite mass and velocities in $\mathbb{R}^3$}
\author{Guido Cavallaro$^*$ and Carlo Marchioro$^+$}
\maketitle
\begin{abstract}
We study existence and uniqueness of the solution to the gravitational Vlasov-Poisson system  
evolving in $\mathbb{R}^3$.
 It is assumed that initially the particles are distributed  according to a spatial density with
 a power-law decay in space, allowing for unbounded mass,  and an exponential decay in velocities given
 by a Maxwell-Boltzmann law. We extend a classical result which holds for systems with finite total mass.
\end{abstract}
\textit{Key words}: Vlasov-Poisson  equation, gravitational  interaction, 
infinite mass.   

\noindent
\textit{Mathematics  Subject  Classification}: 35Q83,  35Q85, 85A05.

\footnotetext{$^*$Dipartimento di Matematica Universit\`a La Sapienza, p.le A. Moro 2,  00185 Roma (Italy),  
cavallar@mat.uniroma1.it}
\footnotetext{$^{+}$International  Research Center M$\&$MOCS   (Mathematics and Mechanics of Complex Systems),
marchior@mat.uniroma1.it}

\section{Introduction and main results}

In the present paper we  deal with the Vlasov-Poisson system with gravitational interaction in the
whole physical space $\mathbb{R}^3$, when the
total mass is infinitely large and the velocities have a Maxwell-Boltzmann distribution.
This model is interesting for astrophysical applications and it has been studied in case of
finite total mass \cite{Ho1, Ho2, R2} or for a suitable perturbation of a given homogeneous background \cite{RR, J}.
It is interesting then to investigate the limit in which the mass of the system increases indefinitely,
in order to show that the properties of the solution are not affected by the size and the total mass of the system.
We focus, in the present paper, on the existence and uniqueness of the solution, which has been obtained in a similar setup, with infinite mass, in case of a plasma \cite{CCM18,  CCM20}, that is for Coulomb interaction. There is a wide literature on the Vlasov-Poisson equation  
\cite{BR, Pf, R, S, W}, see also \cite{G} for a review
of such results,
and \cite{Ca, L, Pa1, Pa2} where it is adopted the so-called method of propagation of moments.
It has  been studied also the case of a plasma with a steady spatial asymptotic behavior \cite{Ch, P, S1, S2, S3},  situations in which the mass of the system
is infinitely large \cite{CCM15,  CCM18, J}, or there is an external magnetic field confining
the system in a given domain \cite{CCM12, CCM, CCM1, CCM15bis, CCM16, CCM17, CCM19}.

The gravitational case, in which the potential energy is negative, can be studied analogously since the absolute value of the potential energy can be controlled by the square root of the kinetic energy, which turns out to be bounded by
the initial data, using conservation of energy. This permits to achieve global existence and uniqueness
in $\mathbb{R}^3$ (see \cite{G} for the possibility of a blow-up in finite time in $\mathbb{R}^d$
for $d\ge 4$). 
It is interesting also to compare what happens in three dimensions for a dust model not described by the Vlasov-Poisson
equation, in which a collapse can occur \cite{R3}. 

This global result  permits to treat the case
with finite total mass, but it is not trivial to be used  for general  initial data
with infinite mass (and energy), in which an accurate limit procedure has to be implemented in order
to achieve the result. This has been done in case of an infinite mass plasma with many species of different charges 
\cite{CCM20}, using the fact that the potential energy is positive, and it deserves to be done for gravitational interaction, since the argument which
permits to perform the limit of infinite mass (via an iterative method) is different.

The strategy is to consider a truncated system by means of a cutoff, for which the total mass is finite and there exists
a unique solution. Then we let the cutoff going to infinity, showing the convergence of the solution
to the one of the infinite mass problem.

We state below the main result of the present paper,  which is discussed in Section 2,
where we introduce a truncated system evolving via a  {\textit{partial  dynamics}.

\bigskip

We denote by $f(x,v,t)$ the mass distribution  at the point of the phase space $(x,v)$ at time $t$.
We let evolve $f$ with the Vlasov-Poisson equation
\begin{equation}
\label{Eq}
\left\{
\begin{aligned}
&\partial_t f(x,v,t) +v\cdot \nabla_x f(x,v,t)  + \, G(x,t) \cdot \nabla_v f(x,v,t)=0  \\
&G(x,t)=-\int_{\R^3 } \frac{x-y}{|x-y|^3} \ \rho(y,t) \, dy     \\
&\rho(x,t)=\int_{\R^3} f(x,v,t) \, dv \\
&f(x,v,0)=f_{0}(x,v)\geq 0,  \qquad  x\in {\R^3 },  \qquad v\in\R^3,  
\end{aligned}
\right.
\end{equation} 
where  $\rho$ is the spatial mass density, $G(x,t)$ is the gravitational field.


Associated to \noindent System (\ref{Eq}) there is the following system for the characteristic equations, along whose solutions $f$ is conserved:
\begin{equation}
\label{ch} 
\begin{cases}
\dsp  \dot{X}(t)= V(t)\\
\dsp  \dot{V}(t)=   G(X(t),t) \\
\dsp (X(0), V(0))=(x,v)  \\
\dsp f(X(t), V(t), t) = f_{0}(x,v).
 \end{cases}
\end{equation}
 We have used here the shortened notation 
\begin{equation}
 \label{2.8}
(X(t),V(t))= (X(t,x,v),V(t,x,v)) 
 \end{equation}
 to represent a characteristic  at time $t$ passing at time $t=0$ through the point $(x,v)$. 
 We have
\begin{equation}
\| f(t)\|_{L^\infty}= \| f_{0} \|_{L^\infty}
\label{f0}
\end{equation}
and also the conservation of the measure of the phase space, by Liouville's theorem.
It is well known that a result of existence and uniqueness of solutions to (\ref{ch}) implies the same result for solutions to (\ref{Eq})
if $f_0$ is smooth.

\noindent  We adopt in the sequel the following notation, we  denote by $C$ 
any positive constant depending only on the initial data and the parameters of the problem,   
reserving to number some of them when relevant.

\noindent The main result of the paper is stated in the following theorem:

\begin{theorem}
Let us fix an arbitrary positive time $T$, and  assume $f_{0}$ satisfies
 the following hypothesis:  
 \begin{equation}
0\leq f_{0} (x,v)\leq C_1\, e^{- \lambda |v|^2} \frac{1}{(1+|x|)^{\alpha}}   \label{dec}
 \end{equation}
with $\alpha > 1$, and $\lambda$,  $C_1$,  positive constants. Then there exists a solution to system (\ref{ch}) in $[0,T]$ 
and positive constants $C_2$ and $\mu$ such that  
 \begin{equation}
 0\leq f(x,v,t)\leq C_2 \, e^{- \mu |v|^2} \frac{1}{(1+|x|)^{\alpha}} \label{dec2}.
 \end{equation}
This solution is unique in the class of those satisfying (\ref{dec2}).
\label{3}
\end{theorem}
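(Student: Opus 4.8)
The plan is to construct the solution as a limit of solutions to a sequence of truncated problems with finite mass, following the cutoff strategy announced in the introduction. The setup is the gravitational Vlasov-Poisson system in $\R^3$ with an initial datum decaying like $(1+|x|)^{-\alpha}$ in space (allowing infinite total mass since $\alpha>1$ need not exceed $3$) and Gaussian in velocity.

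\subsection*{Proof strategy}

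First I would introduce a spatial cutoff: replace $f_0$ by $f_0^R$ supported (in $x$) in a ball of radius $R$, for instance $f_0^R(x,v)=f_0(x,v)\,\chi(|x|/R)$ with $\chi$ a smooth cutoff equal to $1$ on $[0,1]$. Each $f_0^R$ has finite total mass and finite energy, so the classical global existence and uniqueness theory for the gravitational Vlasov-Poisson system (the finite-mass result quoted from \cite{Ho1,Ho2,R2}) produces a unique global solution $f^R$, with associated field $G^R$, density $\rho^R$, and characteristics $(X^R,V^R)$. The heart of the argument is then to obtain estimates on $f^R$, $\rho^R$, and $G^R$ that are \emph{uniform in $R$} on the time interval $[0,T]$, and in particular to propagate the bound \eqref{dec2} uniformly.

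The key step is to control the growth of the characteristic velocities. Along \eqref{ch} one has $|\dot V^R|=|G^R(X^R,t)|$, so I would derive an a priori bound on the gravitational field $\|G^R(t)\|_{L^\infty}$ in terms of $\|\rho^R(t)\|_{L^\infty}$ and suitable spatial moments of $\rho^R$ (splitting the singular kernel $|x-y|^{-2}$ into near and far regions is the standard device). The decay factor $(1+|x|)^{-\alpha}$ with $\alpha>1$ is exactly what is needed to make the far-field contribution of $G^R$ finite uniformly in $R$; the near-field contribution is controlled by $\|\rho^R\|_{L^\infty}^{1/3}$ times an $L^1$-type quantity, as usual. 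Feeding this field bound back into the characteristic flow, a Gronwall argument lets me bound the velocity spread and hence transport the Gaussian velocity decay forward in time, at the cost of shrinking the exponent from $\lambda$ to some $\mu>0$ depending on $T$ and the field bound. The representation $f^R(X^R,V^R,t)=f_0^R(x,v)$ together with these flow estimates yields \eqref{dec2} with constants $C_2,\mu$ independent of $R$.

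Having the uniform bound \eqref{dec2}, I would pass to the limit $R\to\infty$: extract (by compactness, using the uniform $L^\infty$ bound and equicontinuity of the characteristics coming from the uniform field bound) a limit $f$, verify it solves \eqref{ch} and inherits \eqref{dec2}, and thereby obtain existence. For uniqueness within the class satisfying \eqref{dec2}, I would compare two such solutions via their characteristics, estimating the difference of the associated fields $G_1-G_2$ in terms of the difference of the densities and then of the flows; here the potential-theoretic estimate on $G$, combined with a Gronwall inequality on $[0,T]$, forces the two flows to coincide. I expect the main obstacle to be establishing the $R$-uniform control of $G^R$ and of the velocity growth: because the total mass is infinite, the usual energy conservation no longer directly bounds the kinetic energy globally, so the field estimate must rely entirely on the spatial decay $(1+|x|)^{-\alpha}$ propagating in time, and one must check that this decay is genuinely preserved by the flow rather than merely assumed. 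The iterative/limit procedure for infinite gravitational mass differs from the Coulomb case precisely because the field is attractive, so the sign of the potential energy cannot be exploited as in \cite{CCM20}, and the whole burden falls on these uniform decay-propagation estimates.
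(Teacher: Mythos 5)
Your proposal follows the paper's general cutoff-and-limit strategy, but it has a genuine gap at its central step, the $R$-uniform control of velocities. You propose to bound $\|G^R(t)\|_{L^\infty}$ pointwise in time in terms of $\|\rho^R(t)\|_{L^\infty}$ and the propagated spatial decay, and then close with a Gronwall argument. This is precisely the route the paper discards at the outset (``the \textit{a priori} bound of the maximum value of the gravitational field is useless for our scope''), and it fails for a concrete reason: pushing \eqref{dec} along the flow degrades both $\|\rho^R(t)\|_{L^\infty}$ and the constant in front of the spatial decay by a factor of order $(1+\Delta(t))^3$, where $\Delta(t)$ is the maximal velocity increment up to time $t$; feeding this into the kernel-splitting field estimate gives a differential inequality of Riccati type, $\dot\Delta(t)\le C\,(1+\Delta(t))^3$, whose solutions blow up in finite time. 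Your scheme therefore only works for small $T$, not for the arbitrary $T$ of the statement. What is needed instead --- and what the paper uses --- is a \emph{sublinear} bound on the time integral of the field along characteristics,
\begin{equation*}
\int_0^t|G^N(X^N(s),s)|\,ds\leq C \left[{\mathcal{V}}^N(t)\right]^\tau, \qquad \tau <\tfrac23,
\end{equation*}
quoted from \cite{CCM18} (Proposition 2.7); since $\tau<1$, this closes on all of $[0,T]$ and yields ${\mathcal{V}}^N(t)<CN$.

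The second, related problem is that you explicitly disavow the energy route (``the field estimate must rely entirely on the spatial decay''), whereas the energy argument is exactly the new content of this paper. Each truncated system has \emph{finite} mass, so its energy is conserved; the paper controls the negative potential energy by the interpolation $[\rho^N]^{5/3}\le C\,k^N$ together with a splitting of the Newtonian potential at scale $R$, obtaining $|U^N(t)|\le C N^{\beta(3-\alpha)}\bigl\{N^{\beta(3-\alpha)}/R + R^{1/5}[T^N(t)]^{3/5}\bigr\}$ and hence, after optimizing $R$ with explicit $N$-dependence, the kinetic-energy bound $T^N(t)\le C N^{\frac73\beta(3-\alpha)}$. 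This bound is what replaces, in the attractive case, the positivity of the energy exploited in \cite{CCM18, CCM20}, and it is indispensable: the time-averaged field estimate above rests on local (mollified) energy bounds of order $N^{1-\eps}$, $\frac1{15}<\eps<1$. This also explains a structural feature your proposal drops: the paper uses a \emph{double} cutoff, $|v|\le N$ and $|x|\le N^\beta$, with $\beta$ a tunable parameter chosen so that $\frac73\beta(3-\alpha)<\frac{14}{15}$, i.e.\ $\beta<\frac{2}{5(3-\alpha)}$; a single spatial cutoff with no tracked growth rate gives you no way to verify this compatibility condition, and the iterative comparison of consecutive truncated flows (which the paper uses in place of your compactness extraction, and which also feeds the uniqueness argument) cannot be run. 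In short, you correctly identify that ``the whole burden falls on these uniform decay-propagation estimates,'' but the tools you propose to carry that burden --- sup-norm field bounds plus Gronwall, with energy conservation set aside --- are exactly the ones that do not suffice here.
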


We remark that the condition
$\alpha >1$ is  assumed  to have a finite gravitational field at time $0$.
We will concentrate in the following on the case (with infinite mass) $1<\alpha<3$, since the cases $\alpha=3$
(a border case with infinite mass) and $\alpha>3$ are simpler.




\bigskip

The proof is analogous to those in Ref.s \cite{CCM18, CCM20}, once some fundamental estimates on the gravitational
and kinetic energies are established, which is the aim of the next section.

\section {Partial dynamics}

 A truncated dynamics, named also \textit{partial dynamics}, is introduced in order to deal with a system of finite mass and size, for which well known results of existence and uniqueness of the solution hold:
\begin{equation}
 f_{0}^N({x}, {v})=f_{0}({x}, {v})\chi_{\{|x|\leq N^\beta\}} (x)  \chi_{\{|v|\leq N\}} (v)  \label{B0}
\end{equation}
where $\chi_{\{\cdot\}}(\cdot)$ is the characteristic function of the set $\{\cdot\}$, and $f_{0}$ satisfies condition \eqref{dec}.
We introduce a velocity cutoff, $N$, and a spatial cutoff, $N^\beta$ (with $\beta>0$ to be fixed later), and we investigate the limit $N\to\infty$.

System (\ref{ch}) then becomes
\begin{equation}
\label{ch'} 
\begin{cases}
\dsp  \dot{X}^N(t)= V^N(t)\\
\dsp  \dot{V}^N(t)= G^N(X^N(t),t) \\
\dsp (X^N(0), V^N(0))=(x,v)  \\
\dsp f^N(X^N(t), V^N(t), t) = f_{0}^N(x,v),
 \end{cases}
\end{equation}
where
$$
G^N(x,t)=- \int_{\R^3 } \frac{x-y}{|x-y|^3} \ \rho^N(y,t) \, dy     
$$
$$
\rho^N(x,t)=\int_{\R^3} f^N(x,v,t) \, dv.
$$

Since $f_{0}^N$ has compact support, system (\ref{ch'})  admits a unique solution globally in time
(see for instance \cite{G}).
We want to show that, when $N\to \infty$, such solution converges to the unique solution of system (\ref{ch}).

\noindent In order to perform this limit, which is constructively obtained by means of an iterative procedure, 
we need to point out the dependence on the cutoff $N$ in the estimates of the kinetic energy,  the
gravitational energy, and the gravitational field.

We give a sketch of the proof.  The heart of the matter consists to control  the velocities of the  particles, and to this aim we need to bound the gravitational field produced by the whole system.
The \textit{a priori} bound of the maximum value of the gravitational field is useless for our scope, whereas a better bound,
useful for our technique, can be obtained if we consider the time integral of $|G^N|$.
In other words  we define the {\textit{maximal velocity}} of the particles in the partial dynamics as
 \begin{equation}
{\mathcal{V}}^N(t)=  \max\left\{ C_3,\sup_{s\in[0,t]}\sup_{(x,v)\in B_x \times B_v} 
|V^N(s)|\right\} \label{mv}
\end{equation}
where $B_x=B(0, N^\beta)$,  $B_v=B(0, N)$, are balls in $\mathbb{R}^3$ of center $0$ and radius $N^\beta$, $N$, respectively, and the constant $C_3>1$ is suitably chosen.
In the same way as in \cite{CCM18}  (Proposition 2.7) it can be proved that
\begin{equation*}
\int_0^t|G^N(X^N(s),s)|ds\leq C \left[{\mathcal{V}}^N(t)\right]^\tau \quad \quad \hbox{with}\quad \tau <\frac23,  
\end{equation*}
for any $t\in[0, T]$,
which permits to control the particles'  velocities in the partial dynamics, that is to establish the bound
${\mathcal{V}}^N(t) <C N$, 
 which is fundamental for the convergence
of the iterative method involving the quantities $|X^N(t)-X^{N+1}(t)|$ and $|V^N(t)-V^{N+1}(t)|$.

\medskip

The total energy of the truncated system is
\begin{equation}
\mathcal{E}^N(t) = \frac12 \int d x \int d v \, |v|^2  f^N( x, v,t) 
 - \frac12 \int dx \,
 \rho^N( x,t)\int dy \, 
\frac{ \rho^N ( y,t)}{  | x- y|}
\label{W_en}
 \end{equation}
which is decomposed into a kinetic and a potential energy.
Since we are referring to the partial dynamics,
the total energy is finite (with a dependence on $N$ which will be made explicit) and it is conserved in time,
$
\mathcal{E}^N(t) = \mathcal{E}^N(0).
$

 We recall a classical argument which allows to bound the potential energy by the square root of
 the kinetic energy. Then we obtain the explicit dependence on $N$ of the kinetic 
 energy, which permits to make the iterative method work.
 
 \begin{proposition}
 Under the hypothesis of Theorem \ref{3}, it results
 \begin{equation}
 \frac12 \int d x \int d v \, |v|^2  f^N( x, v,t) \le  C N^{\frac73\beta(3-\alpha)}.
 \end{equation}
 \end{proposition}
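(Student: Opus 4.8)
The plan is to close an inequality on the kinetic energy by exploiting the conservation of the total energy. Write $\mathcal{E}^N(t) = K^N(t) - U^N(t)$ with
\[
K^N(t) = \frac12\int dx\int dv\,|v|^2 f^N(x,v,t), \qquad U^N(t) = \frac12\int dx\,\rho^N(x,t)\int dy\,\frac{\rho^N(y,t)}{|x-y|}\ge 0,
\]
so that $K^N(t)$ is exactly the quantity to be estimated. Since $\mathcal{E}^N$ is conserved and $U^N(0)\ge 0$, we have $K^N(t) = K^N(0) - U^N(0) + U^N(t) \le K^N(0) + U^N(t)$. The heart of the argument is then the classical bound of the potential energy by the square root of the kinetic energy, in a form that makes the dependence on the total mass $M^N := \|\rho^N(t)\|_{L^1}=\int\!\int f_0^N\,dx\,dv$ (conserved by the partial dynamics) explicit.

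To obtain that bound I would proceed by interpolation. First, the Hardy--Littlewood--Sobolev inequality in $\R^3$ gives $U^N \le C\,\|\rho^N\|_{L^{6/5}}^2$. Next, the standard velocity-splitting estimate: writing $\rho^N(x) = \int_{|v|<R}f^N\,dv + \int_{|v|\ge R}f^N\,dv \le C\|f^N\|_{L^\infty}R^3 + R^{-2}\int|v|^2 f^N\,dv$ and optimizing over $R$ yields the pointwise bound $\rho^N(x)\le C\|f_0\|_{L^\infty}^{2/5}\bigl(\int|v|^2 f^N\,dv\bigr)^{3/5}$, hence $\|\rho^N\|_{L^{5/3}}^{5/3}\le C\|f_0\|_{L^\infty}^{2/3}K^N$. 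Interpolating $L^{6/5}$ between $L^1$ and $L^{5/3}$ (the $L^1$ factor carrying exponent $7/12$) and combining the three estimates gives
\[
U^N \le C\,\|\rho^N\|_{L^1}^{7/6}\,\|\rho^N\|_{L^{5/3}}^{5/6} \le C\,(M^N)^{7/6}\,(K^N)^{1/2},
\]
where $\|f_0\|_{L^\infty}$ has been absorbed into $C$.

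With this in hand the inequality for the kinetic energy becomes $K^N(t)\le K^N(0) + C(M^N)^{7/6}(K^N(t))^{1/2}$, a quadratic inequality in $(K^N(t))^{1/2}$ whose solution is $K^N(t)\le C\,(M^N)^{7/3} + C\,K^N(0)$. It then remains to track the $N$-dependence of the two terms using the decay hypothesis \eqref{dec}. Integrating $f_0^N$ over $B_v=B(0,N)$ (indeed over all of $\R^3$) and over $B_x=B(0,N^\beta)$ gives, for $1<\alpha<3$,
\[
M^N \le C\int_{|x|\le N^\beta}\frac{dx}{(1+|x|)^\alpha}\le C\,N^{\beta(3-\alpha)},
\]
since the radial integral $\int_0^{N^\beta} r^{2-\alpha}\,dr$ behaves like $N^{\beta(3-\alpha)}$ and the Gaussian in $v$ contributes only a constant; the same computation, with the extra factor $|v|^2$ again producing a finite velocity integral, gives $K^N(0)\le C\,N^{\beta(3-\alpha)}$. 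Hence $(M^N)^{7/3}\le C\,N^{\frac73\beta(3-\alpha)}$ dominates $K^N(0)$ because $3-\alpha>0$ and $7/3>1$, which yields the claimed bound $K^N(t)\le C\,N^{\frac73\beta(3-\alpha)}$.

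The main obstacle is the second paragraph: one must choose the Lebesgue exponents so that the velocity-averaging estimate feeds into Hardy--Littlewood--Sobolev and the interpolation exactly reproduces the power $(K^N)^{1/2}$, while simultaneously isolating the mass factor with exponent $7/6$. It is precisely this mass exponent---together with the fact that the truncated mass grows like $N^{\beta(3-\alpha)}$ rather than staying bounded---that forces the final rate $\frac73\beta(3-\alpha)$ and renders the initial kinetic energy negligible by comparison.
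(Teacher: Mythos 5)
Your proof is correct, and it reaches the paper's bound by a genuinely different technical route in the key step, though the overall skeleton coincides. Like the paper, you use conservation of $\mathcal{E}^N$, a bound for the potential energy of the form $|U^N|\le C\,(M^N)^{7/6}\,[T^N]^{1/2}$ with the mass dependence made explicit, the estimates $M^N\le CN^{\beta(3-\alpha)}$ and $T^N(0)\le CN^{\beta(3-\alpha)}$, and the resolution of the resulting quadratic inequality in $[T^N]^{1/2}$. Where you differ is in how the potential-energy inequality is obtained: the paper never invokes Hardy--Littlewood--Sobolev. Instead it splits the Newtonian kernel at a radius $R$, estimates the near part by H\"older with exponents $(5/3,5/2)$ (using the same velocity-splitting bound $\|\rho^N\|_{L^{5/3}}^{5/3}\le C\,T^N$ that you use), bounds the far part by $1/R$ times the total mass, and then chooses $R=N^{\frac56\beta(3-\alpha)}[T^N]^{-1/2}$; this hand-made optimization produces exactly $|U^N|\le CN^{\frac76\beta(3-\alpha)}[T^N]^{1/2}$, i.e.\ your inequality after substituting $M^N\le CN^{\beta(3-\alpha)}$. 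Your route (HLS, then interpolation of $L^{6/5}$ between $L^1$ and $L^{5/3}$ with the $L^1$ factor carrying exponent $7/12$) is more structural: it cleanly separates the universal inequality $U\lesssim M^{7/6}K^{1/2}$ from the $N$-dependence, and it makes the origin of the exponent $\frac73=2\cdot\frac76$ transparent, at the price of citing a named theorem. The paper's kernel splitting is elementary and self-contained, with the interpolation carried out by hand through the $N$- and $T^N$-dependent choice of $R$. Both arguments are complete and yield identical exponents.
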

 \begin{proof}
 Let us put 
$$
 T^N(t)= \frac12 \int d x \int d v \, |v|^2  f^N( x, v,t)
 $$
 for the kinetic energy, and
 $$
 U^N(t)= - \frac12 \int dx \,
 \rho^N( x,t)\int dy \, 
\frac{ \rho^N ( y,t)}{  | x- y|}
$$
for the potential energy, in such a way that the total energy is 
\begin{equation}
\label{en^N}
\mathcal{E}^N(t)=T^N(t)+U^N(t)=\mathcal{E}^N(0).
\end{equation}

\noindent Notice that, for any $a>0$, it is
\begin{equation*}
\begin{split}
\rho^N(x,t)=&\int  f^N(x,v,t)\,dv\leq \int_{|v|\leq a}  f^N(x,v,t)\,dv\,+\\&\frac{1}{a^2}\int_{|v|>a}\ |v|^2 f^N(x,v,t)\,dv
\end{split}
\end{equation*}
so that, by (\ref{f0}), we have 
\begin{equation}
\rho^N(x,t)\leq C a^3+\frac{1}{a^2}k^N(x,t)
\end{equation}
where
 $$
 k^N(x,t):= \int  |v|^2 f^N(x,v,t)\,dv.
 $$
 By minimizing over $a$ and  taking the power $\frac53$ of both members we get 
\begin{equation*}
\rho^N(x,t)^{\frac53} \leq C k^N(x,t).
\end{equation*}
We evaluate the following integral appearing in the definition of $U^N(t)$,
\begin{equation}
\begin{split}
\int dy \, 
\frac{ \rho^N ( y,t)}{  | x- y|} &= \int_{|x-y|\le R} dy \, 
\frac{ \rho^N ( y,t)}{  | x- y|} + \int_{|x-y|>R} dy \, 
\frac{ \rho^N ( y,t)}{  | x- y|}\\
 &:= A^N(x,t)+B^N(x,t).
\end{split}
\end{equation}
It results
\begin{equation}
B^N(x,t) \le \frac1R \int  dy \, 
\rho^N ( y,t) \le \frac{C}{R} N^{\beta(3-\alpha)}
\end{equation}
by  \eqref{dec}, the spatial cutoff $N^\beta$ introduced in (\ref{B0}), and the conservation of the total mass.
For $A^N(x,t)$ we have
\begin{equation}
\begin{split}
A^N(x,t) &\le \left(\int dy \, 
\rho^N ( y,t)^\frac53   \right)^\frac35
\left( \int_{|x-y|\le R} dy \, \frac{1}{  | x- y|^\frac52}  \right)^\frac25 \\
&\le C R^\frac15    \left(\int dy \, 
k^N(y,t)  \right)^\frac35 \le C R^\frac15 \left[ T^N(t)  \right]^\frac35.
\end{split}
\end{equation}
Therefore, by definition of $U^N(t)$ and again by  \eqref{dec}, the spatial cutoff $N^\beta$,  and the conservation of the total mass,
\begin{equation}
\begin{split}
|U^N(t)|&\le  C\left\{\frac{N^{\beta(3-\alpha)}}{R}  +  R^\frac15  \left[ T^N(t)  \right]^\frac35   \right\} \int dx \,
 \rho^N( x,t)  \\
 &\le  C N^{\beta(3-\alpha)} \left\{\frac{N^{\beta(3-\alpha)}}{R}  +  R^\frac15  \left[ T^N(t)  \right]^\frac35   \right\}.
 \end{split}
\end{equation}
Hence by \eqref{en^N},
\begin{equation}
\begin{split}
T^N(t) &= \mathcal{E}^N(0)  -U^N(t) \\
 &\le \mathcal{E}^N(0) + C N^{\beta(3-\alpha)} \left\{\frac{N^{\beta(3-\alpha)}}{R}  +  R^\frac15  \left[ T^N(t)  \right]^\frac35   \right\}.
 \end{split}
\end{equation}
If we weren't interested in letting $N\to \infty$ (that is,  keeping  $N$ constant), we could optimize  this expression by taking
$R=Const. \left[ T^N(t)  \right]^{-1/2}$, obtaining $|U^N(t)|\le Const. \left[ T^N(t)  \right]^{1/2}$
and then $T^N(t)$ finite.
In the present case, since we are going to take the limit $N\to \infty$, it is appropriate to make explicit the
dependence on $N$, by choosing
$$
R=N^{\frac56\beta(3-\alpha)} \left[ T^N(t)  \right]^{-1/2},
$$ 
from which, recalling that  by \eqref{dec} and the spatial cutoff $N^\beta$ it is  $\mathcal{E}^N(0)\le C N^{\beta(3-\alpha)}$,  we have
$$
T^N(t) \le  C N^{\frac73\beta(3-\alpha)}.
$$

 \end{proof}
 
 With the previous estimate on the kinetic energy the rest of the proof follows the same steps of Ref.
 \cite{CCM18} (which was strongly based on the positivity of the total energy).  In fact in \cite{CCM18} it is proved that a mollified version of the energy of a ball of radius
 $R^N(t)$ (the maximum displacement of the particles) is bounded by $C N^{1-\eps}$, with
 $\frac{1}{15}<\eps<1$.
 This condition is fulfilled  in the present situation by the kinetic energy (which is the relevant 
 quantity in the proof) if
 $$
 0<\frac73\beta(3-\alpha)< \frac{14}{15},
 $$
 that is by choosing any $\beta$ satisfying
 $$
 0<\beta< \frac{2}{5(3-\alpha)},
 $$
 (recalling that $1<\alpha<3$).  Hence from this point the rest of the proof is analogous to the one in Ref. \cite{CCM18}, and we do not repeat it.

\bigskip

\bigskip

\bigskip

\bigskip

\textbf{Acknowledgments}.
Work performed under the auspices of 
GNFM-INDAM and the Italian Ministry of the University (MUR).  

\bigskip

\bigskip

\end{document}